\newtheorem{thm}{Theorem}[section]
\newtheorem{cor}[thm]{Corollary}
\newtheorem{lem}[thm]{Lemma}
\newtheorem{prop}[thm]{Proposition}
\theoremstyle{definition}
\renewcommand{\epsilon}{\varepsilon}
\renewcommand{\phi}{\varphi}
\newcommand{\defeq}{\mathrel{\mathop:}=}
\newcommand{\smsum}{\textstyle\sum}
\newcommand{\on}{\operatorname}
\begin{document}


\title{Every simple compact semiring is finite}

\author{Friedrich Martin Schneider}
\author{Jens Zumbr\"agel}
\address{F.~M.~Schneider, Institute of Algebra, TU Dresden, 01062 Dresden, Germany}
\address{J.~Zumbr\"agel, LACAL IC, EPFL, 1015 Lausanne, Switzerland}

\date{\today}

\begin{abstract} A Hausdorff topological semiring is called simple if every non-zero continuous homomorphism into another Hausdorff topological semiring is injective.  Classical work by Anzai and Kaplansky implies that any simple compact ring is finite. We generalize this result by proving that every simple compact semiring is finite, i.e., every infinite compact semiring admits a proper non-trivial quotient. \end{abstract}

\maketitle


\section{Introduction}

In this note we study simple Hausdorff topological semirings, i.e., those where every non-zero continuous homomorphism into another Hausdorff topological semiring is injective. A compact Hausdorff topological semiring is simple if and only if its only closed congruences are the trivial ones. The structure of simple compact rings is well understood: a classical result due to Kaplansky~\cite{kaplansky-rings} states that every simple compact Hausdorff topological ring is finite and thus -- by the Wedderburn-Artin theorem -- isomorphic to a matrix ring $\on{M}_{n}(\mathbb{F})$ over some finite field~$\mathbb{F}$. In particular, it follows that any compact field is finite. We note that Kaplansky's result may as well be deduced from earlier work of Anzai~\cite{anzai}, who proved that every compact Hausdorff topological ring with non-trivial multiplication is disconnected and that moreover every compact Hausdorff topological ring without left (or right) total zero divisors is profinite, i.e., representable as a projective limit of finite discrete rings. Of course, a generalization of the mentioned results by Anzai cannot be expected for general compact semirings: in fact, there are numerous examples of non-zero connected compact semirings with multiplicative unit, which in particular cannot be profinite. However, in the present paper, we extend Kaplansky's result and show that any simple compact Hausdorff topological semiring is finite (Theorem~\ref{theorem:main.result}). Hence, the classification of finite simple semirings applies, which has been established in~\cite{zumba}.

\section{Semirings}\label{section:preliminaries}

In this section we briefly recall several elementary concepts concerning semirings. For a start let us fix some general terminology. We assume the reader to be familiar with classical algebraic structures or algebras, such as semigroups, monoids, groups, and rings, as well as the related concepts of subalgebras, homomorphisms, and product algebras. If~$A$ is any algebra, then a \emph{congruence} on~$A$ is an equivalence relation on~$A$ constituting a subalgebra of the product algebra~$A \times A$. 

Let~$R$ be a \emph{semiring}, i.e., an algebra $(R,+,\cdot,0)$ satisfying the following conditions: \begin{itemize}
	\item[---] \ $(R,+,0)$ is a commutative monoid and $(R,\cdot)$ is a semigroup,
	\item[---] \ $x \cdot (y + z) = x\cdot y + x\cdot z$ and $(x + y)\cdot z = x \cdot z + y \cdot z$ for all $x,y,z \in R$,
	\item[---] \ $0 \cdot x = x \cdot 0 = 0$ for every $x \in R$.
\end{itemize} A \emph{subsemiring} of~$R$ is a subset $A \subseteq R$ such that~$A$ is a submonoid of the additive monoid of~$R$ and a subsemigroup of the multiplicative semigroup of~$R$. Similarly, a \emph{homomorphism} from~$R$ into another semiring~$S$ is a map $h \colon R \to S$ such that~$h$ is both a homomorphism from the additive monoid of~$R$ to that of~$S$ and from the multiplicative semigroup of~$R$ to that of~$S$. Clearly, an equivalence relation~$\theta$ on~$R$ is a congruence on~$R$ if and only if~$\theta$ is a congruence on the additive monoid of~$R$ and the multiplicative semigroup of~$R$. As usual, an \emph{ideal} of~$R$ is a submonoid~$A$ of the additive monoid of~$R$ such that $R A \cup A R \subseteq A$. Moreover, a subset $A \subseteq R$ is called \emph{subtractive} if the following holds: \begin{displaymath}
	\forall x \in R \, \forall a \in A \colon \ x+a \in A \Longrightarrow x \in A .
\end{displaymath}
Subtractive ideals are closely related to the following congruences due to Bourne~\cite{bourne51}, as the subsequent basic lemma reveals.

\begin{lem}[\cite{bourne51}]\label{lemma:bourne} Let $R$ be a semiring and let $A$ be an ideal of $R$. Then \begin{displaymath}
	\kappa_{A} \defeq \{ (x,y) \in R \times R \mid \exists a,b \in A \colon \, x+a = y+b \}
\end{displaymath} is a congruence on $R$. Furthermore, $A$ is subtractive if and only if $A = [0]_{\kappa_{A}}$. \end{lem}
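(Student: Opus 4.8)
The plan is to verify directly that $\kappa_A$ is an equivalence relation compatible with both semiring operations, invoking the remark that a congruence on $R$ is exactly an equivalence relation that is simultaneously a congruence on the additive monoid and on the multiplicative semigroup, and then to analyse the class $[0]_{\kappa_A}$.

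First I would check the three equivalence-relation axioms. Reflexivity follows from $x+0 = x+0$ together with $0 \in A$; symmetry is immediate from the symmetric role of $a$ and $b$ in the defining condition. For transitivity, given witnesses $x+a = y+b$ and $y+c = z+d$, I would add $c$ and $b$ respectively to pass through $y$, obtaining $x + (a+c) = z + (d+b)$, using that $A$ is closed under addition.

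Compatibility with addition is then routine: from $x+a = y+b$ and $x'+a' = y'+b'$ one simply adds the equations to get $(x+x') + (a+a') = (y+y') + (b+b')$, again with sums landing in $A$. The step where the full ideal hypothesis $RA \cup AR \subseteq A$ is genuinely needed — and which I regard as the only substantive point — is compatibility with multiplication. Here I would multiply the two witnessing equations, $(x+a)(x'+a') = (y+b)(y'+b')$, and expand by distributivity into $xx' + (xa' + ax' + aa') = yy' + (yb' + by' + bb')$. Each parenthesised summand lies in $A$, since $xa' \in RA$, $ax' \in AR$, and $aa' \in AR$ (and symmetrically on the right), which exhibits a witness for $(xx', yy') \in \kappa_A$. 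This completes the proof that $\kappa_A$ is a congruence.

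For the second assertion I would first record the two observations that make everything transparent: the inclusion $A \subseteq [0]_{\kappa_A}$ holds unconditionally (for $x \in A$ take $a = 0$, $b = x$), and $x \in [0]_{\kappa_A}$ means precisely that $x + a = b$ for some $a,b \in A$. Hence the claimed equality $A = [0]_{\kappa_A}$ is equivalent to the reverse inclusion $[0]_{\kappa_A} \subseteq A$. Both implications are then short. If $A$ is subtractive, any $x \in [0]_{\kappa_A}$ satisfies $x + a = b \in A$ for some $a \in A$, so subtractivity forces $x \in A$. Conversely, if $A = [0]_{\kappa_A}$, then whenever $x + a \in A$ for some $a \in A$, setting $b \defeq x + a$ gives a witness for $x \in [0]_{\kappa_A} = A$, which is exactly the subtractivity condition.
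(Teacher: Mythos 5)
Your proof is correct. The paper itself gives no proof of this lemma---it is quoted from Bourne's 1951 paper---and your direct verification (equivalence relation, additive compatibility by adding the witnessing equations, multiplicative compatibility by multiplying and expanding them so that the ideal property $RA \cup AR \subseteq A$ absorbs the cross terms, and the two short implications for $A = [0]_{\kappa_A}$) is exactly the standard argument the citation delegates to, with no gaps.
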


Let us turn our attention towards naturally ordered semirings. To this end, let $(M,+,0)$ be a commutative monoid. Notice that the relation given by
 \begin{equation*}
	x \leq y \ :\Longleftrightarrow \ \exists z \in M \colon\, x + z = y \qquad (x,y \in M)
\end{equation*} is a preorder, i.e., $\leq$ is reflexive and transitive. We say that~$M$ is \emph{naturally ordered} if the preorder~$\le$ is anti-symmetric, which means that $(M,\leq)$ is a partially ordered set with least element~$0$. Now let~$R$ be a semiring and consider the preorder~$\leq$ defined as above with regard to the additive monoid of~$R$. It is straightforward to check that \begin{displaymath}
	\forall x,x',y,y' \in R \colon \ x \leq x', \, y \leq y' \, \Longrightarrow \, x+y \leq x'+y' , \ xy \leq x'y' .
\end{displaymath} We say that~$R$ is \emph{naturally ordered} if its additive monoid is naturally ordered. Such a semiring~$R$ is called \emph{bounded} if the partially ordered set $(R,\leq)$ is bounded, in which case we denote the greatest element by~$\infty$. One may easily deduce the following observations. 

\begin{lem}\label{lemma:downsets} Let~$R$ be a semiring and let $a \in R$. Then the following hold: \begin{enumerate}[\quad\upshape (1)]
\item $I_a \defeq \{ x \in R \mid x \leq a \}$ is a subtractive set.
\item $I_a$ is a submonoid of $(R,+,0)$ if and only if $a + a \le a$.
\item If~$R$ is naturally ordered and bounded, then $R I_a \cup I_a R \subseteq I_a$ if and only if $\infty a \infty \le a$.
\end{enumerate} \end{lem}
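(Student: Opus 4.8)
The plan is to prove each of the three assertions about $I_a = \{x \in R \mid x \le a\}$ by unwinding the definition of the natural preorder and exploiting the compatibility of $\le$ with the semiring operations already recorded in the excerpt.

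First I would handle part (1). To show $I_a$ is subtractive, I must verify that for all $x \in R$ and $c \in I_a$, if $x + c \in I_a$ then $x \in I_a$. Suppose $x + c \le a$ and $c \le a$. By definition $x \le x + c$ (taking the witness $z = c$ in the definition of $\le$), so transitivity of $\le$ gives $x \le x + c \le a$, hence $x \in I_a$. This step is essentially immediate and requires only reflexivity/transitivity together with the fact that every element is $\le$ any of its sums.

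Next, for part (2), I would prove the equivalence $I_a$ is a submonoid of $(R,+,0)$ iff $a + a \le a$. Since $0 \le a$ always holds, $0 \in I_a$, so the content is closure under $+$. For the forward direction, if $I_a$ is closed under addition, then since $a \in I_a$ we get $a + a \in I_a$, i.e.\ $a + a \le a$. For the converse, assume $a + a \le a$ and take $x, y \in I_a$, so $x \le a$ and $y \le a$; by the displayed monotonicity law ($x \le x', y \le y' \Rightarrow x + y \le x' + y'$) applied with $x' = y' = a$, we obtain $x + y \le a + a \le a$, so $x + y \in I_a$. The main subtlety here is remembering that anti-symmetry of the natural order is what makes $\le$ a genuine partial order, but for this argument only monotonicity and transitivity are actually used.

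Finally, for part (3), under the assumption that $R$ is naturally ordered and bounded with greatest element $\infty$, I would establish $R I_a \cup I_a R \subseteq I_a$ iff $\infty a \infty \le a$. For the forward direction, note $a \in I_a$, and closure under left and right multiplication forces $\infty a \in I_a$ and then $\infty a \infty \in I_a$, i.e.\ $\infty a \infty \le a$. For the converse, given $\infty a \infty \le a$ and any $r \in R$, $x \in I_a$, I use $r \le \infty$, $\infty = \infty$, and $x \le a$ together with the multiplicative monotonicity (applied twice, to handle both factors) to get $r x \le \infty a$ and symmetrically $x r \le a \infty$; combining with $\infty a \le \infty a \infty \le a$ and $a \infty \le \infty a \infty \le a$ yields $rx, xr \in I_a$. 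The step I expect to be the main obstacle is organising the chain of multiplicative-monotonicity applications in part (3) correctly: one must bound a product $rx$ by replacing one factor at a time, being careful that $\le$ is only a partial order so intermediate terms like $\infty a$ and $a\infty$ must be threaded through $\infty a \infty$ to reach $a$. Everything else reduces to the two compatibility properties of $\le$ and the boundedness assumption.
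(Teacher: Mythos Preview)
Your arguments for parts~(1), (2), and the forward implication of~(3) are correct and are exactly the straightforward unwindings the paper has in mind (the paper gives no proof, treating the lemma as routine).

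The converse of~(3), however, contains a genuine gap, and it is precisely the step you flagged as the ``main obstacle'': the inequalities $\infty a \le \infty a \infty$ and $a\infty \le \infty a \infty$ are asserted without justification, and they are in fact false in general. Monotonicity lets you enlarge factors, not insert new ones; there is no reason why $y \le y\infty$ should hold in an arbitrary bounded naturally ordered semiring. Concretely, take $R=\{0,a,b,\infty\}$ with idempotent addition given by the lattice order in which $0$ is bottom, $\infty$ is top, and $a,b$ are incomparable with $a\vee b=\infty$; define multiplication by $xy=b$ if $x\ge a$ and $y\ge a$, and $xy=0$ otherwise. One checks directly that this is an associative, distributive, naturally ordered, bounded semiring. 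Here $\infty a = b$ while $\infty a\infty = b\infty = 0 \le a$, so the hypothesis $\infty a\infty\le a$ holds but $\infty a = b \notin I_a=\{0,a\}$, i.e.\ $R I_a \not\subseteq I_a$.

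Thus the equivalence in~(3) is not correct as stated; the condition genuinely equivalent to $R I_a\cup I_a R\subseteq I_a$ is the pair of inequalities $\infty a\le a$ and $a\infty\le a$ (your argument goes through cleanly with this hypothesis, via $rx\le\infty a\le a$ and $xr\le a\infty\le a$). This does not affect the paper's main theorem: in both places where Lemma~\ref{lemma:downsets}(3) is invoked, namely for $a=\infty\infty$ and for $a=\infty x\infty$, the stronger conditions $\infty a\le a$ and $a\infty\le a$ follow immediately from $\infty\infty\le\infty$ and monotonicity.
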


\section{Topological semirings}\label{section:topological.preliminaries}

Henceforth, our primary concern are \emph{topological algebras}, i.e., algebras coming along with a topology such that each of the operations of the algebra is continuous. We are particularly interested in their \emph{closed} congruences, that is, those congruences on a topological algebra $A$ which are closed in the product space $A \times A$. In this regard, we study \emph{topological semirings}, i.e., semirings equipped with a topology so that both the addition and the multiplication are continuous maps. The study of topological semirings was initiated by Bourne~\cite{bourne59,bourne60}, and a list of further references may be found in~\cite{robbie-survey}. 

As usual, by a \emph{topological ring} we mean a ring equipped with a topology such that the additive group is a topological group and the multiplicative semiring is a topological semiring. Recall that any compact Hausdorff topological monoid that is a group is indeed a topological group (see, e.g.,~\cite{topgroups}). This readily implies the following fact.

\begin{prop}\label{proposition:compact.rings} Let~$R$ be a compact Hausdorff topological semiring.  If~$R$ is a	ring, then~$R$ is a topological ring. \end{prop}

Let us make a simple remark concerning the Bourne congruence mentioned in Section~\ref{section:preliminaries}.

\begin{lem}\label{lemma:closed.bourne} Let $R$ be a compact Hausdorff topological semiring. If $A$ a closed ideal of $R$, then the congruence $\kappa_{A}$ is closed in $R \times R$. \end{lem}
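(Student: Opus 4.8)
The plan is to exhibit $\kappa_A$ as the image of a closed subset of a product space under a projection that collapses a compact factor, and then to invoke the standard fact that such projections are closed maps. First observe that, since $R$ is compact Hausdorff and $A$ is closed in $R$, the set $A$ is compact, and hence so is $A \times A$.

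Next I would consider the two continuous maps $f, g \colon R \times R \times A \times A \to R$ given by $f(x,y,a,b) = x+a$ and $g(x,y,a,b) = y+b$; these are continuous because the addition of $R$ is continuous and because projections and restrictions of continuous maps are continuous. Since $R$ is Hausdorff, the equalizer
\[ S \defeq \{ (x,y,a,b) \in R \times R \times A \times A \mid x+a = y+b \} = \{ \zeta \mid f(\zeta) = g(\zeta) \} \]
is closed in $R \times R \times A \times A$. By the very definition of $\kappa_A$, we have $\kappa_A = \pi(S)$, where $\pi \colon R \times R \times A \times A \to R \times R$ denotes the projection onto the first two coordinates.

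It remains to argue that $\pi(S)$ is closed, and this is where the compactness of the collapsed factor enters. For any topological space $X$ and any compact space $K$, the projection $X \times K \to X$ is a closed map, a consequence of the tube lemma. Applying this with $X = R \times R$ and $K = A \times A$ shows that $\pi$ is closed, whence $\kappa_A = \pi(S)$ is closed in $R \times R$, as desired.

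The only genuinely nontrivial ingredient is the closedness of the projection along the compact factor $A \times A$; everything else is a routine continuity-and-Hausdorff argument. Accordingly, I would either cite this projection property as standard or append a one-line reminder of its proof via the tube lemma, and otherwise keep the write-up short.
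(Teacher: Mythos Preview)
Your proof is correct and follows essentially the same route as the paper: both write $\kappa_A$ as the image under the coordinate projection $\pi \colon R^2 \times A^2 \to R^2$ of the closed set $\{(x,y,a,b) \mid x+a=y+b\}$, and then argue that this image is closed. The only cosmetic difference is in that last step---the paper uses compactness of the whole space $R^2 \times A^2$ to conclude the closed set is compact and hence has compact (thus closed) image, whereas you invoke the tube-lemma fact that projecting off a compact factor is a closed map; either justification is standard and neither offers a real advantage here.
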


\begin{proof} Obviously, each of the two mappings $\lambda \colon R^2 \times A^2 \to R^2, \, (x, y, a, b) \mapsto (x+a, y+b)$ and $\pi \colon R^2 \times A^2 \to R^2, \, (x, y, a, b) \mapsto (x,y)$ is continuous. Since~$R$ is a Hausdorff space, the set $\Delta_R = \{ (x,x) \mid x \in R \}$ is closed in $R \times R$. As~$R$ and hence~$A$ are compact, it follows that the closed subset $\lambda^{-1} (\Delta_{R}) \subseteq R^2 \times A^2$ is compact. Therefore $\kappa_{A} = \pi( \lambda^{-1} (\Delta_{R}) )$ is compact and thus closed in $R \times R$. \end{proof}

Next we observe that for compact semirings the preorder introduced in Section~\ref{section:preliminaries} interacts nicely with the given topology. 

\begin{lem}\label{lemma:closed.downsets} If~$R$ is a compact Hausdorff topological semiring, then $\leq$ is closed in $R \times R$. \end{lem}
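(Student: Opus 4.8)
The plan is to realise $\leq$ as the continuous image of a compact space, exactly as in the proof of Lemma~\ref{lemma:closed.bourne}. By definition, $\leq$ is the set of pairs $(x,y)$ for which there exists a witness $z \in R$ with $x + z = y$; the existential quantifier over $z$ is precisely what compactness will allow us to eliminate. Concretely, I would consider the map $\phi \colon R \times R \to R \times R$ given by $\phi(x,z) = (x, x+z)$. This map is continuous, being assembled from the projection onto the first coordinate and the continuous addition of~$R$ on the second.

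Next I would identify its image. For $(x,z) \in R \times R$ we have $\phi(x,z) = (x, x+z)$, so as $z$ ranges over~$R$ the second coordinate runs through exactly those $y$ with $x \leq y$. Hence $\phi(R \times R) = \{(x,y) \mid x \leq y\}$, which is the relation~$\leq$. Since~$R$ is compact, the product $R \times R$ is compact, and therefore its continuous image $\phi(R \times R) = {\leq}$ is compact as well. As~$R$ is Hausdorff, so is $R \times R$, and compact subsets of a Hausdorff space are closed. Thus~$\leq$ is closed in $R \times R$, as claimed.

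The only subtle point is the passage from the existential description of~$\leq$ to a closed set, and this is exactly where compactness is indispensable: without it the witnesses~$z$ need not remain in a compact region, so a limit of the form $x+z=y$ might fail to be attained. I anticipate no genuine obstacle, the argument being a direct analogue of Lemma~\ref{lemma:closed.bourne}. An equally short alternative is a net argument: given a convergent net $(x_i,y_i) \to (x,y)$ with witnesses $z_i$ satisfying $x_i + z_i = y_i$, one extracts by compactness a convergent subnet $z_{i_j} \to z$ and invokes continuity of addition to conclude $x + z = y$, whence $x \leq y$.
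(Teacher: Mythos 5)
Your proof is correct and takes essentially the same approach as the paper: both realise $\leq$ as the continuous image of a compact space and conclude closedness from the fact that compact subsets of the Hausdorff space $R \times R$ are closed. The only (cosmetic) difference is that you parametrise the relation directly via $(x,z) \mapsto (x, x+z)$, whereas the paper writes ${\leq} = \pi(\phi^{-1}(\Delta_R))$ with $\phi(x,y,z) = (x+z,y)$, carving out the witness set as a preimage of the diagonal before projecting.
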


\begin{proof} The proof proceeds analogously to that of Lemma~\ref{lemma:closed.bourne}. Considering the continuous mappings given by $\phi \colon R^3 \to R^2, \, (x, y, z) \mapsto (x+z, y)$ and $\pi \colon R^3 \to R^2, \, (x, y, z) \mapsto (x, y)$, we conclude that ${\le} = \pi( \phi^{-1}( \Delta_R) )$ is closed in $R \times R$. \end{proof}

As one might expect, compactness affects the order structure of a naturally ordered Hausdorff topological semiring (see Corollary~\ref{corollary:compact.semirings.are.complete}). In fact, this is due to a more general reason as the following observation reveals.

\begin{prop}\label{proposition:bounded} If $(M,+,0)$ is a naturally ordered commutative compact Hausdorff topological monoid, then $(M,\leq)$ is bounded. \end{prop}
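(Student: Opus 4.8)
The plan is to exhibit the greatest element of $(M,\leq)$ directly, as a point common to all principal up-sets, and to extract such a point from compactness via the finite intersection property. For each $a \in M$ write $\uparrow a \defeq \{ y \in M \mid a \leq y \}$ for the up-set of~$a$. Since~$M$ is naturally ordered, $0$ is already the least element of $(M,\leq)$, so the entire content of the statement is to produce a greatest element, equivalently a point of $\bigcap_{a \in M} \uparrow a$.

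First I would record that each $\uparrow a$ is closed. The argument of Lemma~\ref{lemma:closed.downsets}, which uses only continuity of the addition together with compactness and the Hausdorff property, shows that the relation $\leq$ is closed in $M \times M$; consequently $\uparrow a$, being the preimage of $\leq$ under the continuous map $M \to M \times M,\, y \mapsto (a,y)$, is closed. Next I would verify that the family $\{ \uparrow a \mid a \in M \}$ enjoys the finite intersection property. Given $a_1,\dots,a_n \in M$, set $s \defeq a_1 + \cdots + a_n$. Because $s = a_i + \sum_{j \neq i} a_j$, we have $a_i \leq s$ for every~$i$, so that $s \in \bigcap_{i=1}^{n} \uparrow a_i$; and the empty subfamily is covered by $\uparrow 0 = M$. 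Thus every finite subfamily has non-empty intersection.

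Finally, compactness of~$M$ upgrades this to $\bigcap_{a \in M} \uparrow a \neq \emptyset$. Choosing any $\infty$ in this intersection yields $a \leq \infty$ for all $a \in M$, so that $\infty$ is the greatest element and $(M,\leq)$ is bounded. I do not anticipate a serious obstacle: the only delicate point is the closedness of the up-sets, which is exactly what the compactness-driven closedness of $\leq$ from Lemma~\ref{lemma:closed.downsets} provides, while commutativity is what makes $s$ a simultaneous upper bound and hence fuels the finite intersection property.
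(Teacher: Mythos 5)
Your proof is correct and takes essentially the same route as the paper: both extract the greatest element as a point in the intersection of a family of closed sets with the finite intersection property, where finite sums (via commutativity) serve as the common upper bounds witnessing that property. The only cosmetic difference is that you intersect the principal up-sets $\{ y \in M \mid a \leq y \}$ directly, after establishing closedness of $\leq$, whereas the paper intersects closures of sets of finite sums---and then still relies, implicitly, on the same closedness fact ($a + M$ compact, hence closed) in its final step.
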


\begin{proof} For a finite subset $F \subseteq M$, denote by~$A_{F}$ the closure of $\{ \smsum F' \mid F' \subseteq M \text{ finite}, \, F \subseteq F' \}$ in the topological space~$M$. Now $\mathcal{A} := \{ A_{F} \mid F \subseteq M \text{ finite} \}$ is a collection of closed non-empty subsets of $M$. Since $A_{F_{0} \cup F_{1}} \subseteq A_{F_{0}} \cap A_{F_{1}}$ for all finite subsets $F_{0},F_{1} \subseteq S$, we conclude that $\mathcal{A}$ has the finite intersection property. Thus, $\bigcap \mathcal{A} \ne \varnothing$ by compactness of~$M$. Consider any $s \in \bigcap \mathcal{A}$. Then $s \in A_{\{ x \}}$ and hence $x \leq s$ for every $x \in M$. This shows that~$s$ is the greatest element of $(M,\leq)$. \end{proof}

\begin{cor}\label{corollary:compact.semirings.are.complete} Any naturally ordered compact Hausdorff topological semiring is bounded. \end{cor}

\section{Simple compact semirings}\label{section:classification}

In this section we finally come to simple compact semirings. A Hausdorff topological algebra~$A$ is called \emph{simple} if every non-constant continuous homomorphism from~$A$ into another Hausdorff topological algebra of the same type is injective. We start with a simple reformulation of this property.

\begin{prop}\label{proposition:simple.equivalence} Let~$A$ be a compact Hausdorff topological algebra. Then~$A$ is simple if and only if $\Delta_A = \{ (a, a) \mid a \in A \}$ and $A \times A$ are the only closed congruences on~$A$. \end{prop}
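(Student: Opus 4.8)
The plan is to prove both implications by translating between closed congruences and the quotient homomorphisms they induce. The direction ($\Leftarrow$) is the easy one. Assuming that $\Delta_A$ and $A \times A$ are the only closed congruences, I would take an arbitrary non-constant continuous homomorphism $h \colon A \to B$ into a Hausdorff topological algebra~$B$ of the same type and consider its kernel $\theta \defeq \{ (x,y) \mid h(x) = h(y) \} = (h \times h)^{-1}(\Delta_B)$. Since~$B$ is Hausdorff, $\Delta_B$ is closed in $B \times B$, and as $h \times h$ is continuous, $\theta$ is a \emph{closed} congruence on~$A$. By hypothesis $\theta \in \{ \Delta_A, A \times A \}$; the case $\theta = A \times A$ would make~$h$ constant, so $\theta = \Delta_A$, which is precisely the injectivity of~$h$.

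For the converse ($\Rightarrow$), suppose~$A$ is simple and let~$\theta$ be a closed congruence with $\theta \neq A \times A$. The goal is to exhibit the quotient map $q \colon A \to A/\theta$ as a non-constant continuous homomorphism into a Hausdorff topological algebra of the same type; simplicity then forces~$q$ to be injective, i.e.\ $\theta = \Delta_A$, which leaves only the two stated possibilities. I would endow $A/\theta$ with the quotient topology and first record that it is compact, being a continuous image of~$A$, and Hausdorff, by the standard fact that the quotient of a compact Hausdorff space by a closed equivalence relation is Hausdorff. Since~$\theta$ is a congruence, every operation of~$A$ descends to a well-defined operation on $A/\theta$, so $A/\theta$ is an algebra of the same type and~$q$ is a homomorphism; moreover $\theta \neq A \times A$ guarantees at least two classes, so~$q$ is non-constant.

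The technical heart, and the step I expect to be the main obstacle, is the continuity of the induced operations on $A/\theta$. For an $n$-ary operation~$f$ on~$A$ with descent~$\bar f$ on $A/\theta$, one has the identity $\bar f \circ (q \times \dots \times q) = q \circ f$ (with $n$ copies of~$q$), whose right-hand side is continuous. To deduce continuity of~$\bar f$ it suffices to know that the $n$-fold product $q \times \dots \times q \colon A^{n} \to (A/\theta)^{n}$ is a quotient map, and this is exactly where compactness is indispensable: $A^{n}$ is compact and $(A/\theta)^{n}$ is Hausdorff, so the continuous surjection $q \times \dots \times q$ is automatically closed, and a closed continuous surjection is a quotient map. (Nullary operations, such as the constant~$0$, descend to points and are trivially continuous.) Consequently each~$\bar f$ is continuous, $A/\theta$ is a Hausdorff topological algebra of the same type, and the simplicity of~$A$ applies to~$q$ as intended.
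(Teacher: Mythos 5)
Your proposal is correct and follows essentially the same route as the paper: the ($\Leftarrow$) direction via the closed kernel congruence $(h \times h)^{-1}(\Delta_B)$, and the ($\Rightarrow$) direction via the quotient algebra $A/\theta$ with the quotient topology, which is Hausdorff because $\theta$ is closed and $A$ is compact Hausdorff. The only difference is that where the paper cites Bourbaki for the fact that $q \times \dots \times q \colon A^n \to (A/\theta)^n$ is a quotient map, you supply a self-contained justification (a continuous surjection from a compact space to a Hausdorff space is closed, hence a quotient map), which is exactly the right argument and correctly identifies compactness as the point where the general failure of products of quotient maps is avoided.
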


\begin{proof} ($\Longrightarrow$) Suppose that $A$ is simple. Let $\theta$ be a closed congruence on~$A$. Then there is a unique algebraic structure on the quotient set $A/\theta = \{ [a]_{\theta} \mid a \in A \}$ such that the quotient map $\phi \colon A \to A/\theta, \, a \mapsto [a]_{\theta}$ becomes a homomorphism. We endow $A/\theta$ with the corresponding quotient topology, i.e., the final topology generated by $\phi$. Since $A$ is a compact Hausdorff space and $\theta$ is closed in $A \times A$, the quotient space $A/\theta$ is a Hausdorff space (see~\cite[\S 10.4, Prop.~8]{Bourbaki1}). It follows that $A^{n} \to (A/\theta)^{n}, \, (a_{1},\ldots,a_{n}) \mapsto ([a_{1}]_{\theta},\ldots,[a_{n}]_{\theta})$ is a quotient map for every $n \geq 1$ (see~\cite[\S 10.2, Cor.~2]{Bourbaki1}). From this and the fact that $\phi$ is a continuous homomorphism, we infer that any of the operations of the algebra $A/\theta$ is continuous. This shows that $A/\theta$ is a Hausdorff topological algebra. Hence, simplicity of~$A$ asserts that $\phi$ is injective or constant. Consequently, $\theta = \Delta_{A}$ or $\theta = A \times A$.

	($\Longleftarrow$) Assume that $\Delta_A$ and $A \times A$ are the only closed congruences on~$A$. Let $B$ be any Hausdorff topological algebra of the same type as $A$ and let $\phi \colon A \to B$ be a non-constant continuous homomorphism. Since $B$ is a Hausdorff space, $\Delta_{B}$ is a closed congruence on $B$. As $\phi$ is a continuous homomorphism, $\ker \phi = \{ (x,y) \in A \times A\mid \phi(x) = \phi(y) \} = (\phi \times \phi)^{-1}(\Delta_{B})$ is a closed congruence on $A$. By assumption, $\ker \phi = \Delta_{A}$ or $\ker \phi = A \times A$. Since~$\phi$ is non-constant, it follows that~$\phi$ is injective. \end{proof}

Note that a Hausdorff topological ring~$R$ is simple if and only if~$\{ 0 \}$ and $R$ are the only closed ideals of~$R$. We recall the following classical result due to Kaplansky~\cite{kaplansky-rings}.

\begin{thm}[\cite{kaplansky-rings}]\label{theorem:simple.compact.rings} Every simple compact Hausdorff topological ring is finite. \end{thm}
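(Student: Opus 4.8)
The plan is to exploit the remark preceding the statement: for a Hausdorff topological ring~$R$, simplicity is equivalent to $\{0\}$ and~$R$ being the only closed (two-sided) ideals. By Proposition~\ref{proposition:compact.rings}, $R$ is a topological ring, so its additive group is a compact Hausdorff abelian group. I would first split into two cases according to whether~$R$ is totally disconnected or connected, using that the connected component~$C$ of~$0$ is a closed ideal. Indeed, for fixed $r$ the maps $x \mapsto rx$ and $x \mapsto xr$ are continuous and send the connected set $C \ni 0$ into the connected component of~$0$, so $RC \cup CR \subseteq C$; since~$C$ is also a closed additive subgroup of the compact group $(R,+)$, simplicity forces $C = \{0\}$ or $C = R$.

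In the totally disconnected case ($C = \{0\}$) I would invoke the structure theory of compact rings: a compact totally disconnected ring admits a neighbourhood basis of~$0$ consisting of open two-sided ideals (the ring-theoretic form of van Dantzig's theorem; equivalently, $R$ is profinite). Assuming $R \neq \{0\}$, choose $a \neq 0$ and an open ideal~$I$ with $a \notin I$. Then~$I$ is a proper open, hence closed, ideal, so simplicity gives $I = \{0\}$. But then $\{0\}$ is open, $R$ is discrete, and a compact discrete space is finite.

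In the connected case ($C = R$) I would appeal to Anzai's theorem~\cite{anzai} that a compact ring with non-trivial multiplication is disconnected; hence a connected~$R$ satisfies $xy = 0$ for all $x,y$, so that \emph{every} closed additive subgroup of~$R$ is automatically a closed ideal. If $R \neq \{0\}$, then Pontryagin duality produces a proper non-trivial closed subgroup: the dual $\widehat{R}$ is a non-zero discrete abelian group, torsion-free because~$R$ is connected, and such a group always possesses a subgroup strictly between~$0$ and $\widehat{R}$ (a copy of~$\mathbb{Z}$, or $2\mathbb{Z} \subsetneq \mathbb{Z}$ when $\widehat{R} \cong \mathbb{Z}$); dualizing back yields a proper non-trivial closed subgroup of~$R$. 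This contradicts simplicity, so a connected simple compact ring must be trivial, hence finite.

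The main obstacle is the input drawn from the structure theory of compact rings in each case: establishing that a totally disconnected compact ring has a base of open \emph{two-sided ideals} (rather than merely open additive subgroups), which is where continuity of multiplication and compactness must be combined carefully, and the reliance on Anzai's disconnectedness result to eliminate genuine multiplication in the connected branch. Everything else — the connected/totally-disconnected dichotomy, the compact-plus-discrete conclusion, and the duality argument — is routine, and notably one needs only finiteness here, not the full Wedderburn–Artin description.
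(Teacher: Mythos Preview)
The paper does not supply its own proof of this theorem; it is quoted as a classical result of Kaplansky~\cite{kaplansky-rings}, followed only by the remark that it can alternatively be deduced from Anzai's theorem~\cite{anzai} that any compact Hausdorff ring with non-trivial multiplication is disconnected. Your proposal is correct and in effect carries out precisely the deduction the paper alludes to: the connected-component dichotomy together with Anzai's result disposes of the connected branch (trivial multiplication, then Pontryagin duality to manufacture a proper closed subgroup), while the profinite structure theorem for compact totally disconnected rings handles the other branch. The two inputs you flag as the real work---the existence of a neighbourhood basis of~$0$ consisting of open two-sided ideals in the totally disconnected case, and Anzai's disconnectedness theorem---are indeed the substantive points, and both are standard in the structure theory of compact rings; everything else in your argument is routine, as you note.
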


We remark that one may deduce Kaplansky's result from the fact that any compact Hausdorff topological ring with non-trivial multiplication is disconnected, which was proven earlier by Anzai~\cite{anzai}. Even though there are many examples of non-trivial connected compact Hausdorff topological semirings, it is natural to ask whether Theorem~\ref{theorem:simple.compact.rings} extends to semirings. We provide an affirmative answer to this question in Theorem~\ref{theorem:main.result}, which constitutes the main result of the present note.

As a first step, we give the following necessary condition for a compact Hausdorff topological semiring to be simple, which follows immediately from Lemma~\ref{lemma:bourne}, Lemma~\ref{lemma:closed.bourne}, and Proposition~\ref{proposition:simple.equivalence}.

\begin{cor}\label{corollary:ideals.in.simple.rings} If~$R$ is a simple compact Hausdorff topological semiring, then $\{ 0 \}$ and $R$ are the only closed subtractive ideals of $R$. \end{cor}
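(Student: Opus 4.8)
The plan is to attach to an arbitrary closed subtractive ideal $A$ of $R$ its Bourne congruence $\kappa_A$, and then to play off the two characterizations of $A$ against the constraint that simplicity imposes on closed congruences. First I would record that $\kappa_A$ is a closed congruence: since $A$ is an ideal, Lemma~\ref{lemma:bourne} tells us that $\kappa_A$ is a congruence on $R$, and since $A$ is in addition closed, Lemma~\ref{lemma:closed.bourne} guarantees that $\kappa_A$ is closed in $R \times R$. It is worth noting that both of these inputs apply to the \emph{same} set $A$ precisely because a closed subtractive ideal is simultaneously closed and an ideal.

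Next I would invoke simplicity. Because $R$ is a simple compact Hausdorff topological semiring, Proposition~\ref{proposition:simple.equivalence} asserts that $\Delta_R$ and $R \times R$ are the only closed congruences on $R$. Applying this to the closed congruence $\kappa_A$ just produced, we conclude that either $\kappa_A = \Delta_R$ or $\kappa_A = R \times R$.

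Finally, I would convert this dichotomy about $\kappa_A$ into the desired dichotomy about $A$ using subtractivity. By the second assertion of Lemma~\ref{lemma:bourne}, the subtractivity of $A$ is equivalent to the identity $A = [0]_{\kappa_A}$. Reading off the $\kappa_A$-class of $0$ in the two cases then finishes the argument: if $\kappa_A = \Delta_R$, then $[0]_{\kappa_A} = \{0\}$ and hence $A = \{0\}$; if $\kappa_A = R \times R$, then $[0]_{\kappa_A} = R$ and hence $A = R$. In either case $A$ is trivial, which is exactly the claim.

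I do not anticipate a genuine obstacle here, since the statement is a corollary whose entire content is carried by the three cited results; the proof is essentially their juxtaposition. The only point demanding a moment's attention is the bookkeeping just mentioned, namely keeping track that closedness of $A$ feeds Lemma~\ref{lemma:closed.bourne} while subtractivity of $A$ feeds the $A = [0]_{\kappa_A}$ clause of Lemma~\ref{lemma:bourne}, so that both features of the hypothesis are used and neither is superfluous.
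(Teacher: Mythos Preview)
Your proposal is correct and follows exactly the approach indicated by the paper, which states that the corollary ``follows immediately from Lemma~\ref{lemma:bourne}, Lemma~\ref{lemma:closed.bourne}, and Proposition~\ref{proposition:simple.equivalence}.'' You have simply spelled out how these three results combine, and your bookkeeping about which hypothesis feeds which lemma is accurate.
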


The next result provides a key observation towards the desired Theorem~\ref{theorem:main.result}.

\begin{prop}\label{proposition:dichotomy} If~$R$ is a simple compact Hausdorff topological semiring, then~$R$ is a finite ring or a naturally ordered semiring. \end{prop}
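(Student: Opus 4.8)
The plan is to exploit the natural preorder $\leq$ and the precise way in which its anti-symmetry can fail. By definition $R$ is naturally ordered exactly when $\leq$ is anti-symmetric, so I would measure the deviation from anti-symmetry by the symmetric part of $\leq$, namely
\begin{displaymath}
\theta \defeq \{ (x,y) \in R \times R \mid x \leq y \ \text{and} \ y \leq x \},
\end{displaymath}
that is, the intersection of $\leq$ with its converse $\geq$. This $\theta$ is an equivalence relation, and the strategy is to show that it is in fact a \emph{closed congruence}. Simplicity will then force $\theta$ to be one of the two trivial congruences, and each of these two cases will deliver one of the claimed alternatives.

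First I would verify that $\theta$ is a congruence, which is immediate from the compatibility of $\leq$ with both operations recorded in Section~\ref{section:preliminaries}: if $x \leq x'$, $x' \leq x$, $y \leq y'$, and $y' \leq y$, then $x+y \leq x'+y'$ together with $x'+y' \leq x+y$, and likewise $xy \leq x'y'$ together with $x'y' \leq xy$, so that $(x+y, x'+y') \in \theta$ and $(xy, x'y') \in \theta$. Next I would check that $\theta$ is closed. By Lemma~\ref{lemma:closed.downsets} the relation $\leq$ is closed in $R \times R$; since the swap map $(x,y) \mapsto (y,x)$ is a homeomorphism of $R \times R$, the converse $\geq$ is closed as well, and hence so is their intersection $\theta$. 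Applying Proposition~\ref{proposition:simple.equivalence} to the simple compact Hausdorff algebra~$R$ then yields $\theta = \Delta_R$ or $\theta = R \times R$.

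These two cases give exactly the two horns of the dichotomy. If $\theta = \Delta_R$, then $\leq$ is anti-symmetric, so $R$ is naturally ordered by definition. If instead $\theta = R \times R$, then in particular $(x, 0) \in \theta$ for every $x \in R$, so $x \leq 0$, i.e.\ there is some $z \in R$ with $x + z = 0$. Thus every element of the commutative monoid $(R,+,0)$ admits an additive inverse, so $(R,+,0)$ is an abelian group and $R$ is a ring; by Proposition~\ref{proposition:compact.rings} it is then a topological ring. Since the congruences of $R$ as a semiring coincide with its congruences as a ring, Proposition~\ref{proposition:simple.equivalence} shows that $R$ is simple also as a topological ring, and Kaplansky's Theorem~\ref{theorem:simple.compact.rings} forces $R$ to be finite.

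The individual steps are routine; the conceptual crux is the single observation that the symmetric part of the natural preorder is a closed congruence, for this is precisely what couples the purely order-theoretic dichotomy to the simplicity hypothesis. The one point that I expect to require a little care is the final reduction, namely confirming that semiring-simplicity of $R$ coincides with ring-simplicity once $R$ is known to be a ring, so that Theorem~\ref{theorem:simple.compact.rings} genuinely applies.
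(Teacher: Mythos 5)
Your proposal is correct and follows essentially the same route as the paper: the symmetric part $\theta$ of the natural preorder is shown to be a closed congruence via Lemma~\ref{lemma:closed.downsets}, simplicity collapses it to $\Delta_R$ or $R \times R$, and the two cases yield natural order or (via Proposition~\ref{proposition:compact.rings} and Theorem~\ref{theorem:simple.compact.rings}) a finite ring. Your explicit remark that semiring congruences on a ring coincide with ring congruences, so that semiring-simplicity transfers to ring-simplicity before invoking Kaplansky, is a detail the paper leaves implicit and is handled correctly.
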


\begin{proof} It is easy to check that the equivalence relation $\theta \defeq \{ (x,y) \in R \times R \mid x \leq y , \, y \leq x \}$ is a congruence on~$R$. Furthermore, Lemma~\ref{lemma:closed.downsets} implies that $\theta$ is closed in $R \times R$. As~$R$ is simple, Proposition~\ref{proposition:simple.equivalence} asserts that $\theta = {\Delta_{R}}$ or $\theta = {R \times R}$. In the first case, $R$ is naturally ordered. In the second case, $(x,0) \in \theta$ and hence $0 \in R + x$ for all $x \in R$, which means that the additive monoid of $R$ is a group.  Consequently, $R$ is a ring and therefore $R$ is a topological ring by Proposition~\ref{proposition:compact.rings}, which is finite according to Theorem~\ref{theorem:simple.compact.rings}. \end{proof}

The previous result reduces the task of showing that all simple compact semirings are finite to studying naturally ordered ones.  As an additional preparatory step we state the following result, which could also be proven in a more general context, cf.~\cite[Thm.~4.4, Proof]{profinite}.

\begin{lem}\label{lemma:open-congruence} Let~$R$ be a compact Hausdorff topological semiring. If~$\theta$ is an open equivalence relation on~$R$, then there exists an open congruence~$\theta_0$ on~$R$ such that $\theta_0 \subseteq \theta$. \end{lem}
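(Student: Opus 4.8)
The plan is to produce the \emph{largest} congruence contained in $\theta$ and then to show, by a compactness argument, that this congruence is automatically open. Recall the standard fact from universal algebra that an equivalence relation on $R$ is a congruence exactly when it is invariant under every \emph{elementary translation}, i.e.\ under each of the continuous maps $x \mapsto x+c$, $x \mapsto cx$ and $x \mapsto xc$ with $c \in R$ (the nullary operation $0$ imposes no condition). Accordingly I would define
\[
 \theta_0 \defeq \{ (x,y) \in R \times R \mid \forall p \in M \colon (p(x), p(y)) \in \theta \},
\]
where $M$ is the monoid of all finite compositions of elementary translations. Since invariance under each generator of $M$ already forces invariance under all of $M$, one checks directly that $\theta_0$ is an equivalence relation invariant under every elementary translation, hence a congruence, and that $\theta_0 \subseteq \theta$ (take $p = \id$).

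The key structural observation, and the step I expect to matter most, is that $M$ admits a uniform \emph{normal form}. Using distributivity one verifies by induction on the length of a composition that every $p \in M$ has the shape $p(x) = \alpha x \beta + \gamma$ for some $(\alpha,\beta,\gamma) \in R^1 \times R^1 \times R$, where $R^1 = R \sqcup \{1\}$ is $R$ with an external, isolated multiplicative identity adjoined; the point $1$ serves only to encode the absence of a left or right factor, as in the purely additive translation $x \mapsto x + \gamma = 1 \cdot x \cdot 1 + \gamma$. The inductive step is routine: post-composing $\alpha x \beta + \gamma$ with $x \mapsto x+d$, $x \mapsto dx$, or $x \mapsto xd$ merely updates the triple to $(\alpha,\beta,\gamma+d)$, $(d\alpha,\beta,d\gamma)$, or $(\alpha,\beta d,\gamma d)$. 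Consequently the a priori unbounded quantification ``for all $p \in M$'' collapses to a quantification over the single \emph{compact} parameter space $P \defeq R^1 \times R^1 \times R$, and the evaluation map $\mu \colon P \times R \to R$, $\mu((\alpha,\beta,\gamma),x) = \alpha x \beta + \gamma$, is continuous (since $1$ is isolated, left and right multiplication by elements of $R^1$ are continuous).

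With the normal form in hand, the openness of $\theta_0$ follows by a projection argument analogous to the proof of Lemma~\ref{lemma:closed.bourne}. Consider the continuous map $\Lambda \colon R \times R \times P \to R \times R$ given by $\Lambda(x,y,\pi) = (\mu(\pi,x), \mu(\pi,y))$ and set $D \defeq \Lambda^{-1}(\theta)$. As $\theta$ is open, $D$ is open, so its complement is a closed, hence compact, subset of $R \times R \times P$. By the normal form, $(x,y) \in \theta_0$ if and only if $(x,y,\pi) \in D$ for every $\pi \in P$; therefore $\theta_0$ is precisely the complement in $R \times R$ of the projection of $D^c$ onto the first two coordinates. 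Being the continuous image of a compact set, that projection is compact and thus closed, so $\theta_0$ is open, as desired. The main obstacle is exactly the passage from quantifying over the unbounded family of all unary polynomials to quantifying over a compact parameter set; once the normal form reduces $M$ to three bounded parameters, the compactness step is immediate.
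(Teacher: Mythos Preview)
Your proof is correct and follows essentially the same strategy as the paper's: both define $\theta_0$ via the normal form $x \mapsto \alpha x\beta + \gamma$ for unary polynomials and then establish openness by a compactness argument over the resulting finite-dimensional parameter space. The only cosmetic difference is that you adjoin an isolated external unit to $R$ to encode the absence of a left or right factor, whereas the paper enumerates the four cases $(\alpha,\beta)\in\{1,R\}^2$ explicitly by mapping into $R^{8}$ and testing membership in $\theta^{4}$; your projection-of-the-compact-complement argument is the contrapositive of the paper's tube-lemma step.
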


\begin{proof} Consider the continuous map $\Phi \colon R^{5} \to R^{8}$ defined by \begin{displaymath} \Phi(r,s,t,x,y) \defeq (rxs+t, \, rys+t, \, rx+t, \, ry+t, \, xs+t, \, ys+t, \, x+t, \, y+t) .
\end{displaymath} It is easy to check that the equivalence relation \begin{displaymath}
	\theta_{0} \defeq \{ (x, y) \in R \times R \mid \forall r, s, t \in R \colon \, \Phi(r,s,t,x,y) \in \theta^4 \}
\end{displaymath} is a congruence on~$R$ contained in~$\theta$, where we regard $\theta^4$ as a subset of~$R^8$ in the obvious way. We argue that $\theta_{0}$ is open in $R \times R$. To see this, let $(x,y) \in \theta_{0}$. Then $\Phi (R^{3} \times \{ (x,y) \}) \subseteq \theta^{4}$. As~$R^{3}$ is compact and $\theta^4$ is open in~$R^8$, there is an open subset $U \subseteq R \times R$ such that $(x,y) \in U$ and $\Phi (R^{3} \times U) \subseteq \theta^{4}$. Thus, $(x,y) \in U \subseteq \theta_{0}$. This proves that~$\theta_{0}$ is open in $R \times R$. \end{proof}

Now everything is prepared to prove the aforementioned main result.

\begin{thm}\label{theorem:main.result} Every simple compact Hausdorff topological semiring is finite. \end{thm}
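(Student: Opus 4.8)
The plan is to use Proposition~\ref{proposition:dichotomy} to reduce to the order-theoretic case and then manufacture a proper non-trivial closed congruence, contradicting Proposition~\ref{proposition:simple.equivalence}. Let $R$ be a simple compact Hausdorff topological semiring. By Proposition~\ref{proposition:dichotomy} together with Theorem~\ref{theorem:simple.compact.rings}, either $R$ is a finite ring (and we are done) or $R$ is naturally ordered; so I assume the latter and suppose, towards a contradiction, that $R$ is infinite. By Corollary~\ref{corollary:compact.semirings.are.complete} the poset $(R,\le)$ is bounded with greatest element $\infty$, and $\infty + x = \infty$ for every $x$ since $x \le \infty \le x+\infty \le \infty$.

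First I would record what simplicity forces on ideals. For a closed ideal $A \ne \{0\}$ the Bourne congruence $\kappa_A$ is a closed congruence by Lemma~\ref{lemma:bourne} and Lemma~\ref{lemma:closed.bourne}, and $\kappa_A \ne \Delta_R$ because $(0,a) \in \kappa_A$ for each $a \in A$; hence simplicity gives $\kappa_A = R \times R$, and evaluating $(\infty,0) \in \kappa_A$ with $\infty + a = \infty$ yields $\infty \in A$. Thus \emph{every} non-zero closed ideal of $R$ contains $\infty$, and in particular the family of non-zero closed ideals has a least member. Applying Corollary~\ref{corollary:ideals.in.simple.rings} and Lemma~\ref{lemma:downsets} to the additively idempotent element $\infty\cdot\infty$ (here $\infty\infty \le \infty$ gives $\infty(\infty\infty)\infty \le \infty\infty$, so $I_{\infty\infty}$ is a closed subtractive ideal) then shows $\infty\cdot\infty \in \{0,\infty\}$.

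I would split on this value. If $\infty\cdot\infty = 0$, then $xy \le \infty\infty = 0$ for all $x,y$ by monotonicity of multiplication, so the multiplication is trivial; then for every $a$ with $0 \ne a \ne \infty$ the relation $\rho_a \defeq \Delta_R \cup (\{x \mid a \le x\} \times \{x \mid a \le x\})$ is a closed congruence, since the up-set $\{x \mid a \le x\}$ is closed and absorbs addition while multiplicative compatibility is automatic, and it is proper and non-trivial, contradicting simplicity unless $R = \{0,\infty\}$, against infinitude. The remaining, genuinely hard, case is $\infty\cdot\infty = \infty$. Here $J \defeq \{x \mid \infty x \infty = 0\}$ is a closed subtractive ideal (using $\infty\infty=\infty$), so $J \in \{\{0\},R\}$; the option $J = R$ forces $\infty = \infty\infty\infty = 0$, leaving $J = \{0\}$, i.e.\ $\infty x \infty = 0 \Rightarrow x = 0$.

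This last case is where I expect the main obstacle to lie. The up-set congruences $\rho_a$ are no longer multiplicatively compatible, and the downset ideals $I_e$ of Lemma~\ref{lemma:downsets} fail to absorb products, since for a non-zero additive idempotent $e$ one computes $\infty e \infty = \infty \not\le e$ unless $e = \infty$; so none of the earlier congruences survives. I would exploit the interplay of continuity and order that separates genuine compact topological semirings from the classical non-example $[0,\infty]$, whose multiplication is discontinuous at $(\infty,0)$. Concretely, continuity of $x \mapsto \infty x \infty$ together with $J = \{0\}$ forces the non-zero additive idempotents to stay bounded away from $0$ (a convergent net of them would map to the constant value $\infty$, yet also to $0$). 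I would then combine this rigidity with an analysis of the least non-zero closed ideal and with Lemma~\ref{lemma:open-congruence}: exhibiting any non-total open equivalence relation produces an open, hence clopen, congruence $\theta_0 \ne R \times R$, which by simplicity must be $\Delta_R$, so that $R$ is discrete and therefore finite. The crux is thus to turn the constraint $J=\{0\}$ and the continuity of multiplication into either such an open equivalence relation or an absorbing order ideal, and this is the step I expect to require the most care.
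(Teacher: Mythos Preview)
Your setup matches the paper's exactly through the dichotomy, boundedness, the case $RR=\{0\}$, and the reduction to $\infty\cdot\infty=\infty$. The gap is in the ``hard'' case, and it is precisely the step you flag as unfinished---but the missing observation is short, and your own dismissal of the downset ideals is what hides it from you.

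You write that for a non-zero additive idempotent $e$ one has $\infty e\infty=\infty\not\le e$, so $I_e$ fails to be an ideal. But look at the \emph{particular} idempotents $e=\phi(x)\defeq\infty x\infty$: since $\infty\infty=\infty$ you get $\infty\,\phi(x)\,\infty=\infty\infty\,x\,\infty\infty=\infty x\infty=\phi(x)$, and likewise $\phi(x)+\phi(x)=(\infty+\infty)x\infty=\phi(x)$. Thus Lemma~\ref{lemma:downsets} applies to $e=\phi(x)$ for \emph{every} $x$, giving a closed subtractive ideal $I_{\phi(x)}$, hence $\phi(x)\in\{0,\infty\}$ by Corollary~\ref{corollary:ideals.in.simple.rings}. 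So $\phi(R)=\{0,\infty\}$ is discrete, $\ker\phi$ is the open equivalence relation you were searching for, and Lemma~\ref{lemma:open-congruence} finishes exactly as you outlined (the resulting open congruence $\theta_0$ is clopen, so by simplicity $\theta_0=\Delta_R$ and $R$ is finite, or $\theta_0=R\times R$ and $R=\{0\}$). Your detour through $J=\phi^{-1}(0)$ is a special case of this: you showed $\phi^{-1}(0)=\{0\}$, but the same ideal argument applied to $I_{\phi(x)}$ rather than to $J$ gives the full image description at once. In short, the approach you discarded is the one that works, because $\phi$ is idempotent.
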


\begin{proof} Let $R$ be a simple compact Hausdorff topological semiring. By Proposition~\ref{proposition:dichotomy}, we may assume that~$R$ is naturally ordered and thus bounded due to Corollary~\ref{corollary:compact.semirings.are.complete}.

We first treat the case where $R R = \{ 0 \}$. Let $a \in R$ and consider the equivalence relation \begin{displaymath}
	\rho_a \defeq \{ (x, y) \in R \times R \mid x = y \,\text{ or }\, x, y \ge a \} ,
\end{displaymath} which is easily seen to be a semiring congruence as $R R = \{ 0 \}$. Furthermore, $\rho_a$ is closed by Lemma~\ref{lemma:closed.downsets}. Hence, $\rho_a = \Delta_R$ or $\rho_a = R \times R$ due to simplicity of~$R$, which means that $a = \infty$ or $a = 0$. Therefore, $R = \{ 0, \infty \}$ is finite.

Henceforth suppose that $R R \ne \{ 0 \}$. We argue that $\infty \infty = \infty$. To this end, let $a := \infty \infty$. Then $a + a \le a$ and $\infty a \infty \le a$, thus $I_a$ is a subtractive closed ideal by Lemma~\ref{lemma:downsets} and Lemma~\ref{lemma:closed.downsets}. As $R R \ne \{ 0 \}$, we have $I_a \neq \{ 0 \}$, and so the claim follows from Corollary~\ref{corollary:ideals.in.simple.rings}. 
Now let us consider the continuous mapping \begin{displaymath}
	\phi \colon R \to R \, , \quad x \mapsto \infty x \infty .
\end{displaymath} Evidently, $\phi(0) = 0$ and we have seen that $\phi(\infty) = \infty$. Consider any $x \in R$ and notice that $\phi(x) + \phi(x) \leq \phi(x)$ and $\infty \phi(x) \infty \leq \phi(x)$. Hence, $I_{\phi(x)}$ is a subtractive ideal of~$R$ due to Lemma~\ref{lemma:downsets}. Besides, Lemma~\ref{lemma:closed.downsets} asserts that $I_{\phi (x)}$ is closed in~$R$. Therefore $I_{\phi (x)} = \{ 0 \}$ or $I_{\phi (x)} = R$ by Corollary~\ref{corollary:ideals.in.simple.rings}. In particular, the image $\phi (R) = \{ 0, \infty \}$ is finite and thus discrete. Consequently, $\theta \defeq \ker \phi$ is an open equivalence relation on~$R$. By Lemma~\ref{lemma:open-congruence}, there exists an open congruence $\theta_{0} \subseteq \theta$. Since $R/\theta_{0}$ is a partition of~$R$ into open subsets and~$R$ is compact, it follows that $R/\theta_{0}$ is finite. Moreover, as any open equivalence relation is also closed, Proposition~\ref{proposition:simple.equivalence} necessitates that $\theta_{0} = R \times R$ or $\theta_{0} = \Delta_R$. The first case implies that $0 = \phi(0) = \phi(\infty) = \infty$ and therefore $R = \{ 0 \}$, while in the second case $R \cong R/\theta_{0}$, which shows that $R$ is finite. \end{proof}

\section*{Acknowledgments}

The first author is supported by funding of the Excellence Initiative by the German Federal and State Governments. The second author has been funded by the Irish Research Council under grant no.~ELEVATEPD/2013/82.

\printbibliography

\end{document}